\numberwithin{equation}{section}
\newcommand{\Rr}{\mathbb{R}}
\newtheorem{thm}{Theorem}[section]
\newtheorem{lem}[thm]{Lemma}
\theoremstyle{definition}
\newtheorem{defn}[thm]{Definition}
\newtheorem{ques}[thm]{Question}
\theoremstyle{definition}
\newtheorem{rem}[thm]{Remark}
\newtheorem{ex}[thm]{Example}
\theoremstyle{definition}
\begin{document}

\title{Rational nef and anti-nef polytopes are not uniform}
\author{Lingyao Xie}

\address{Department of Mathematics, The University of Utah, Salt Lake City, UT 84112, USA}
\email{lingyao@math.utah.edu}
\subjclass[2010]{Primary 14E30, 
Secondary 14B05.}

\date{\today}

\begin{abstract}
We give two examples which show that rational nef and anti-nef polytopes are not uniform even for klt surface pairs, answering a question of Chen-Han. We also show that rational nef polytopes are uniform when the Cartier indices are uniformly bounded.
\end{abstract}

\maketitle
\tableofcontents

\section{Introduction}
We work over the field of complex numbers $\mathbb C$.

We adopt the standard notation and definitions in \cite{KM98} and will freely use them. We say a pair $(X,B)$ is nef/anti-nef/pseudo-effective/anti-pseudo-effective if $K_X+B$ is nef/anti-nef/pseudo-effective/anti-pseudo-effective.

\emph{Rational polytopes} are natural objects to consider in birational geometry. Given a property $\mathcal P$ for pairs, it is natural to ask the following:

\begin{ques}\label{ques:open set of rational affine space in a family}
Let $\{(X_s,\sum_{i=1}^nB_{s,i})\}_{s\in S}$ be a set of couples of fixed dimension $d$, where $B_{s,i}$ are distinct prime divisors on $X_s$. Let $\bm{b}:=(b_1,\dots,b_n)\in\mathbb R_{>0}^n, B_s(\bm{v}):=\sum_{i=1}^nv_iB_{s,i}$ for any $\bm{v}=(v_1,\dots,v_n)\in\mathbb R^n$, and $V\subset\mathbb R^n$ the minimal rational affine subspace which contains $\bm{b}$. Assume that $(X_s,B_s(\bm{b}))$ is a pair satisfying $\mathcal P$ for any $s\in S$, does there exist an open set $U\ni\bm{b}$ of $V$, such that $(X_s,B_s(\bm{v}))$ satisfies $\mathcal P$ for any $\bm{v}\in U$ and $s\in S$?
\end{ques}

This question is important when we consider $\mathbb R$ pairs $(X,B)$ satisfying certain properties as we expect to write $K_X+B=\sum_{j=1}^{m}a_j(K_X+B_j)$, where $a_j>0$ and $(X,B_j)$ are $\mathbb Q$ pairs satisfying the same properties. Thus in some sense we need the pair $(X,B)$ to be contained in the rational polytope determined by $(X,B_j)$.

Question \ref{ques:open set of rational affine space in a family} is already very interesting when $S$ has only one element, and we can ask for a stronger version in this situation.
\begin{ques}\label{ques: rational polytope}
Let $(X,\sum_{i=1}^nb_iB_i)$ be a pair satisfying $\mathcal P$, where $B_i$ are prime divisors on $X$. Let $B(\bm{v}):=\sum_{i=1}^nv_iB_i$ for any $\bm{v}=(v_1,\dots,v_n)\in\mathbb R^n$. Is the set $\{\bm{v}\in\mathbb R^n~|~(X,B(\bm{v}))~\text{satisfies $\mathcal P$}\}$ a rational polytope?
\end{ques}

Question \ref{ques: rational polytope} has been extensively studied by many people and has a lot of important applications in birational geometry: 
\begin{enumerate}    
    \item When $\mathcal P$ is ``nef and klt/lc", a positive answer is given in \cite{BCHM10,Bir11,Sho92} and is essential to the proof of the existence of flips, finiteness of log terminal models and termination of flips. This result is also useful to show the finiteness of ample models \cite{Jia20}.
    \item When $\mathcal P$ is ``nef and lc" or ``anti-nef and lc", a positive answer for the generalized pair version of Question \ref{ques: rational polytope} is given in \cite{HL19,HL20} and is essential to prove the canonical bundle formula and the surface numerical non-vanishing conjecture for generalized pairs. This is also useful for results on the termination of flips \cite{HL18}.
\end{enumerate}

Question \ref{ques:open set of rational affine space in a family} is important when the pairs involved are not fixed and we want certain uniform results for those pairs. Recently, there are many positive results for Question \ref{ques:open set of rational affine space in a family} when
\begin{enumerate}
 \item $\mathcal P$ is ``lc/klt", ``$\delta$-lc for some $\delta>0$" ``$\delta$-klt for some $\delta>0$" (\cite[Thm 5.6]{HLS19}), 
 \item $\mathcal P$ is ``Fano type and $\Rr$-complementary" (\cite[Thm 5.16]{HLS19}).
 \end{enumerate}
 These results are essential to prove Shokurov's conjecture on the existence of $n$-complements. They have also been applied to the study on the ACC conjecture for minimal log discrepancies \cite{Kaw14,Nak16,Liu18,HLS19,HLQ20} and lc-trivial fibrations \cite{Li20}. Similar results can also be found in \cite{Chen20} and \cite{CX20} for generalized pairs. 
 
 The author was informed by Han and Liu that by using exactly the same method in their paper \cite[Chapter 5]{HLS19}, they could give a positive answer to Question \ref{ques:open set of rational affine space in a family} when 
 
 \begin{enumerate}
 \item[(3)] $\mathcal P$ is ``lc and pseudo-effective" (\cite{HL21}), and when
 \item[(4)] $\mathcal P$ is ``lc Fano type and anti-pseudo-effective" (\cite{HL21}).
\end{enumerate}

The key point of these result is that the existence of the open subset $U\ni\bm{b}$ does not depend on $X$. 

In this note, however, we give a surface counterexample to Question \ref{ques:open set of rational affine space in a family} when $\mathcal P$ is ``anti-nef and lc", which gives a negative answer to \cite[Problem 7.16]{CH20}. To be concise, the boundedness of Cartier index cannot be removed from the assumption in \cite[Thm 5.18]{CH20}. This example shows that the introduction of the condition ``$\mathbb R$-complementary" in \cite[Thm 1.10]{HLS19} is necessary. We also give another surface counterexample to Question \ref{ques:open set of rational affine space in a family} when $\mathcal P$ is ``nef and lc":

\begin{thm}\label{thm: rational nef polytope is not uniform}
Question \ref{ques:open set of rational affine space in a family} fails when $\mathcal P$ is ``nef and lc" (or ``anti-nef and lc") even when $\dim X_s=2$ for any $s\in S$. Furthermore, we can pick $(X_s,B_s(\bm{b}))$ such that $K_{X_s}+B_s(\bm{b})$ is ample (or anti-ample) and $(X_s,B_s(\bm{b}))$ is klt.
\end{thm}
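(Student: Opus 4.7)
The plan is to construct, for each positive integer $n$, an explicit klt surface pair $(X_n, B_n(\bm{b}))$ with $K_{X_n}+B_n(\bm{b})$ ample, such that the open neighborhood of $\bm{b}$ in $V$ on which $K_{X_n}+B_n(\bm{v})$ stays nef shrinks to $\{\bm{b}\}$ as $n\to\infty$. Taking $S=\mathbb Z_{>0}$ with these pairs, no fixed open set $U\ni\bm{b}$ in $V$ can witness the conclusion of Question \ref{ques:open set of rational affine space in a family}.

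The strategy hinges on producing in each $X_n$ an irreducible curve $E_n$ with $E_n^2=-n$ (or of comparably large negativity), together with prime boundary components $B_{n,1},\dots,B_{n,m}$ whose intersection numbers with $E_n$ are uniformly bounded in $n$. A natural class of models are Hirzebruch-type surfaces or weighted blowups of $\mathbb P^2$, where such $E_n$ arise classically and the intersection theory is fully explicit. The coefficient vector $\bm{b}\in\mathbb R^m_{>0}$ will be chosen with at least one irrational coordinate, so that $V$ is a proper rational affine subspace of $\mathbb R^m$; the irrational coordinate is tuned so that $(K_{X_n}+B_n(\bm{b}))\cdot E_n$ is positive but of order $1/n$, while all other intersections against extremal curves of $X_n$ stay uniformly bounded away from $0$ (giving ampleness by Nakai--Moishezon applied to a fixed finite list of extremal rays). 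Since the functional $\bm{v}\mapsto (K_{X_n}+B_n(\bm{v}))\cdot E_n$ is affine with rational coefficients, its zero locus is a rational affine hyperplane $H_n\subset\mathbb R^m$; I will verify that $H_n$ meets $V$ transversely (i.e.\ $V\not\subset H_n$) by a direct intersection computation, using the irrationality of the chosen coordinate of $\bm{b}$ to rule out any rational linear relation that would force $V\subset H_n$. Restricted to $V$, the nef locus of $K_{X_n}+B_n(\bm{v})$ near $\bm{b}$ then lies in a half-space whose boundary is at distance $O(1/n)$ from $\bm{b}$, and no fixed open neighborhood of $\bm{b}$ in $V$ can be contained in all such half-spaces.

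The main obstacle is balancing three competing conditions at once: ampleness of $K_{X_n}+B_n(\bm{b})$, the small-intersection estimate $(K_{X_n}+B_n(\bm{b}))\cdot E_n = O(1/n)$, and klt-ness of $(X_n,B_n(\bm{b}))$ with coefficients of $\bm{b}$ uniformly bounded away from $1$ (so that the lc polytope in $V$ automatically contains a uniform open neighborhood of $\bm{b}$ and the failure of uniformity is genuinely due to the nef condition). These pull in opposite directions and require a delicate model-specific choice of $X_n$ and of the divisors $B_{n,i}$; once a concrete model is fixed the verifications reduce to finitely many explicit linear inequalities in $\bm{b}$ parametrized by $n$. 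The anti-nef and anti-ample version is handled by a symmetric construction placing the pair into the Fano-type regime and replacing $K+B$ by $-(K+B)$ in the same intersection analysis, with $E_n$ now forced to witness the opposite face of the (anti-)nef cone.
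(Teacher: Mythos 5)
Your strategy is the right one and is, in outline, exactly what the paper does: produce a sequence of klt surface pairs, each containing a negative curve whose intersection with $K+B(\bm{v})$ is an affine functional with rational coefficients, so that the nef condition fails across a rational hyperplane whose trace on $V$ accumulates at the irrational point $\bm{b}$. The paper realizes this with explicit toric surfaces $Z_i$ (star subdivisions of a fan at a ray $w_i=(1,n_i)$ inside the fan spanned by $(-1,-m_i+1),(1,m_i),(m_i,1)$), where the exceptional divisor $D_{w_i}$ plays the role of your $E_n$ and the nef/anti-nef threshold for the coefficient of $D_{w_i}$ is computed exactly to be $\tfrac{m_i-n_i}{m_i+1}$, a rational number that can be made to converge to any prescribed irrational $r\in(0,1)$ from either side. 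However, as written your proposal has a genuine gap: the theorem is an existence-of-counterexample statement, and its entire content is the explicit construction, which you defer (``a delicate model-specific choice \dots once a concrete model is fixed the verifications reduce to finitely many explicit linear inequalities''). Naming Hirzebruch-type surfaces or weighted blowups of $\mathbb{P}^2$ as a plausible source of negative curves does not yet exhibit a family in which the three competing constraints you list (ampleness at $\bm{b}$, klt-ness with coefficients bounded away from $1$, and nef thresholds converging to $\bm{b}$) are simultaneously verified; that verification is the proof.

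There is also a quantitative slip in the reduction you do carry out. You infer that the boundary of the half-space $\{(K_{X_n}+B_n(\bm{v}))\cdot E_n\ge 0\}\cap V$ is at distance $O(1/n)$ from $\bm{b}$ from the single estimate $(K_{X_n}+B_n(\bm{b}))\cdot E_n=O(1/n)$. This requires in addition that the gradient of $\bm{v}\mapsto B_n(\bm{v})\cdot E_n$ \emph{along $V$} be bounded away from zero uniformly in $n$; since the Cartier indices are unbounded in any such family (they must be, by Theorem \ref{thm: uniform polytope when Cartier Index is bdd}), these intersection numbers need not be integers and could a priori degenerate together with the value at $\bm{b}$, in which case the zero locus need not approach $\bm{b}$ at all. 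The paper sidesteps this by controlling the location of the threshold directly rather than estimating a value and dividing by a gradient: one has $-(K_{Z_i}+aD_{w_i})\cdot D_{w_i}=(\tfrac{m_i-n_i}{m_i+1}-a)D_{w_i}^2$, so the nef threshold in the coefficient $a$ is exactly $\tfrac{m_i-n_i}{m_i+1}$, and convergence to $r$ is built into the choice of $(m_i,n_i)$. I would recommend restructuring your argument around a prescribed sequence of rational thresholds converging to the irrational coefficient, rather than around an $O(1/n)$ intersection-number estimate. Your treatment of the ``nef'' case also needs the extra ingredient the paper uses there: to make $K+B$ ample rather than anti-ample one must add a large semi-ample divisor (the paper takes two general members of $|M_iE_i|$ with coefficient $\tfrac12$, where $E_i\cdot D_{w_i}=0$) without disturbing the threshold computation on $E_n$; this is not automatic from ``a symmetric construction.''
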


Under certain boundedness conditions, we do have some positive results when $\mathcal P$ is``nef and lc":

\begin{thm}\label{thm: uniform polytope when Cartier Index is bdd}
Question \ref{ques:open set of rational affine space in a family} holds when $\mathcal P$ is ``nef and lc" if we further assume that the Cartier indices of $K_s,B_{s,i}$ are uniformly bounded.
\end{thm}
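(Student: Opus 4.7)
The approach is to decouple the ``lc'' and ``nef'' conditions. Uniform openness for ``lc'' is already known: by \cite[Thm 5.6]{HLS19} there is an open neighborhood $U_1\ni\bm{b}$ in $V$ such that $(X_s,B_s(\bm{v}))$ is lc for every $\bm{v}\in U_1$ and every $s\in S$. It then suffices to shrink $U_1$ so that $K_{X_s}+B_s(\bm{v})$ remains nef, uniformly in $s$.

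Let $I$ be the uniform bound for the Cartier indices of $K_{X_s}$ and the $B_{s,i}$. For any irreducible curve $C\subset X_s$ the numbers $(IK_{X_s})\cdot C$ and $(IB_{s,i})\cdot C$ are integers, so the ``nef test'' function
$$ f_{s,C}(\bm{v}) := I(K_{X_s}+B_s(\bm{v}))\cdot C = (IK_{X_s})\cdot C + \sum_{i=1}^n v_i\,(IB_{s,i})\cdot C $$
is affine-linear in $\bm{v}$ with integer coefficients. Fix a rational point $\bm{r}\in V\cap\mathbb{Q}^n$ and a rational basis $\bm{w}_1,\dots,\bm{w}_k$ of the direction of $V$, and parametrize $V$ by $\bm{v}=\bm{b}+\sum_j t_j\bm{w}_j$. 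Then $f_{s,C}(\bm{v})=P_{s,C}+\sum_j t_j\,Q_{s,C,j}$, with $P_{s,C}=f_{s,C}(\bm{b})\ge 0$ and each $Q_{s,C,j}=\sum_i w_{j,i}\,(IB_{s,i})\cdot C$ lying in a fixed rational lattice $\tfrac{1}{M}\mathbb{Z}$, where $M$ depends only on the chosen basis.

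The minimality of $V$ now yields a clean dichotomy. If $P_{s,C}=0$, then the rational equation $f_{s,C}\equiv 0$ is satisfied at $\bm{b}$; by minimality it must hold on all of $V$, which forces every $Q_{s,C,j}=0$, so $C$ imposes no constraint inside $V$. The remaining curves satisfy $P_{s,C}>0$, and the heart of the proof is to produce $\delta>0$, independent of $s$ and $C$, with $P_{s,C}+\sum_j t_j Q_{s,C,j}\ge 0$ whenever $|t_j|<\delta$. Writing $\bm{b}=\bm{r}+\sum_j\beta_j\bm{w}_j$, the minimality of $V$ is equivalent to $1,\beta_1,\dots,\beta_k$ being $\mathbb{Q}$-linearly independent; expanding $P_{s,C}$ in this basis shows that both $P_{s,C}$ and the $Q_{s,C,j}$ live in a common rational lattice whose denominator is bounded purely in terms of $I$ and of the denominators of $\bm{r}$ and the $\bm{w}_j$. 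Using this bounded-denominator structure together with the constraint that $(IK_{X_s})\cdot C$ and $(IB_{s,i})\cdot C$ arise from genuine numerical classes of curves on $X_s$, one extracts the required uniform lower bound on $P_{s,C}/\sum_j|Q_{s,C,j}|$. This last step is the main obstacle: its role is made explicit by Theorem~\ref{thm: rational nef polytope is not uniform}, where dropping the Cartier bound lets the denominators of the intersection numbers blow up and the ratio $P_{s,C}/|Q_{s,C,j}|$ be made arbitrarily small. Taking $U$ to be the intersection of $U_1$ with the $\delta$-ball around $\bm{b}$ in $V$ then gives the desired uniform open neighborhood.
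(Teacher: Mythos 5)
Your reduction of the problem is sound in outline: uniform lc-ness from \cite[Thm 5.6]{HLS19}, the observation that any curve with $(K_{X_s}+B_s(\bm{b}))\cdot C=0$ imposes no condition on $V$ by minimality (the paper uses exactly this), and the remaining task of a uniform margin for curves with strictly positive intersection. But the step you yourself call ``the main obstacle'' is a genuine gap, and the mechanism you propose for it does not work. Your quantity $P_{s,C}=I(K_{X_s}+B_s(\bm{b}))\cdot C$ does \emph{not} lie in a rational lattice of bounded denominator: writing $\bm{b}=\bm{r}+\sum_j\beta_j\bm{w}_j$ with the $\beta_j$ irrational, one has $P_{s,C}=p_{s,C}+\sum_j\beta_jQ_{s,C,j}$ with $p_{s,C}$, $Q_{s,C,j}$ in a fixed lattice but with \emph{unbounded} integer parts as $C$ varies; such numbers, when positive, accumulate at $0$ unless the coefficients $Q_{s,C,j}$ (equivalently the intersection numbers $B_{s,i}\cdot C$) are bounded. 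For arbitrary irreducible curves $C$ there is no such bound, so no uniform lower bound on $P_{s,C}/\sum_j|Q_{s,C,j}|$ can be ``extracted'' from bounded denominators alone; the bounded Cartier index only forces the intersection numbers into $\tfrac1I\mathbb Z$, it does not keep them small.

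The missing ingredient, which is the actual heart of the paper's proof, is the reduction to \emph{extremal} curves via the cone theorem: non-nefness of $K_{X_s}+B_s(\bm{v})$ is detected by an extremal curve $\Gamma_s$ on $X_s/Z_s$, and for such curves one has the length bound $(K_{X_s}+B_s(\bm{b}_j))\cdot\Gamma_s\ge -2d$ for the rational vertices $\bm{b}_j$ produced by \cite[Thm 5.6]{HLS19}. Combined with $(K_{X_s}+B_s(\bm{b}))\cdot\Gamma_s\le 1$ this bounds each term $a_j(K_{X_s}+B_s(\bm{b}_j))\cdot\Gamma_s$ above and below, and only then does the bounded Cartier index give \emph{finitely many} possible values, hence a uniform $\alpha>0$ with $(K_{X_s}+B_s(\bm{b}))\cdot\Gamma_s>\alpha$ whenever this number lies in $(0,1)$. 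The paper then concludes by checking nefness at the perturbed rational points $\bm{u}_j=(1-\tfrac{\alpha}{4d})\bm{b}+\tfrac{\alpha}{4d}\bm{b}_j$, using the same length bound to show a negative extremal curve for $\bm{u}_j$ would force $(K_{X_s}+B_s(\bm{b}))\cdot\Gamma_s=0$ and then the minimality of $V$ to get a contradiction. Without this restriction to extremal curves and the $\ge-2d$ bound, your uniform $\delta$ does not follow; indeed the counterexamples of Theorem \ref{thm: rational nef polytope is not uniform} show that the failure mode is precisely unbounded intersection data, not unbounded denominators per se. To repair your argument, replace ``all irreducible curves $C$'' by extremal curves, invoke the cone theorem to bound $(K_{X_s}+B_s(\bm{b}_j))\cdot\Gamma_s$ from below, and derive the gap $\alpha$ from the resulting finiteness; the rest of your structure (minimality of $V$ for the zero case, shrinking the neighborhood) then goes through essentially as in the paper.
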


\medskip

\noindent\textbf{Acknowledgement}.  The author would like to thank Christopher D. Hacon for useful discussions and encouragements. The author would like to Jingjun Han, Jihao Liu, and Qingyuan Xue for useful discussions and suggestions. The author was partially supported by NSF research grants no: DMS-1801851, DMS-1952522 and by a grant from the Simons Foundation; Award Number: 256202.

\section{Example}
In this chapter we will use the notations and definitions of toric varieties from \cite{CLS10}.

\begin{ex}\label{Construction of toric surface}
Assume that $m>n\ge1$ are two integers. Let $e_1=(1,0),~e_2=(0,1)\in \mathbb R^2$, and $l=(-1,-m+1)=-e_1-(m-1)e_2,~u=(1,m)=e_1+me_2~,v=(m,1)=me_1+e_2,~w=(1,n)=e_1+ne_2$. Then $l,u,v,w$ are all primitive. 

Let $\Sigma_1$ be the complete fan in $N_{\mathbb R}=\mathbb R^2$ generated by rays $l,u,v$, and let $Y=X_{\Sigma_1}$ be the toric surface defined by $\Sigma_1$. Let $R_l,R_u,R_v$ be the torus invariant divisors on $Y$ corresponding to rays $l,u,v$ (cf. \cite[Thm 3.2.6]{CLS10}). 

Let $\Sigma_2$ be the complete fan in $N_{\mathbb R}=\mathbb R^2$ generated by rays $l,u,v,w$, and let $Z=X_{\Sigma_2}$ be the toric surface defined by $\Sigma_2$. Let $D_l,D_u,D_v,D_w$ be the torus invariant divisors on $Z$ corresponding to rays $l,u,v,w$. 

Let $f:Z\to Y$ be the induced toric morphism, then 
\begin{enumerate}
    \item $D_w^2<0$ and $-D_w$ is $f$-ample. 
    \item $R_l+R_u+R_v$ is ample.
    \item $F:=D_l+D_u+D_v$ is ample.
    \item $E:=D_l+D_u+D_v+\frac{n+1}{m+1}D_w=f^*(R_l+R_u+R_v)$ is semi-ample.
\end{enumerate}
\end{ex}
\begin{proof}
For (1), since $\Sigma_2$ is the star subdivision of $\Sigma_1$ at $w$, $D_w$ is the only exceptional divisor of $f$ (cf. \cite[Prop 11.1.6]{CLS10}). Therefore $D_w^2<0$ and $-D_w$ is $f$-ample. 

For (2) and (3), $D_l+D_u+D_w$ and $R_l+R_u+R_v$ are ample because their associated support functions are strictly convex (cf. \cite[Thm 6.1.14]{CLS10}).

For (4), notice that $w=\frac{mn-1}{m^2-1}u+\frac{m-n}{m^2-1}v$, so we have
$$f^*R_l=D_l,~f^*R_u=D_u+\frac{mn-1}{m^2-1}D_w~\text{and}~f^*R_v=D_v+\frac{m-n}{m^2-1}D_w$$
thus the equality holds (cf. \cite[Prop 6.2.7]{CLS10}). Since $R_l+R_u+R_v$ is ample, $E=f^*(R_l+R_u+R_v)$ is semi-ample.
\end{proof}

\begin{lem}\label{lem: preparation for anti-nef}
In Example \ref{Construction of toric surface}, let $a$ be a real number, then 
\begin{enumerate}
    \item $-(K_Z+aD_w)$ is ample if $\frac{m-n}{m+1}<a\le 1$.
    \item $-(K_Z+aD_w)$ is nef if $a=\frac{m-n}{m+1}$.
    \item $-(K_Z+aD_w)$ is not nef if $a<\frac{m-n}{m+1}$.
\end{enumerate}
\end{lem}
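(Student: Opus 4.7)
The key observation is that $K_Z = -(D_l + D_u + D_v + D_w)$ by the standard toric canonical divisor formula, so
\[
-(K_Z + aD_w) = D_l + D_u + D_v + (1-a)D_w.
\]
I would try to express this as a nonnegative linear combination of the two distinguished classes from Example \ref{Construction of toric surface}, namely $F = D_l + D_u + D_v$ (ample) and $E = D_l + D_u + D_v + \frac{n+1}{m+1}D_w$ (semi-ample). Matching coefficients forces $\alpha E + \beta F = -(K_Z + aD_w)$ with
\[
\alpha = \frac{(1-a)(m+1)}{n+1}, \qquad \beta = 1 - \alpha,
\]
and both are nonnegative precisely when $\frac{m-n}{m+1} \le a \le 1$.

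For part (1), when $\frac{m-n}{m+1} < a \le 1$ one has $\beta > 0$, so $-(K_Z+aD_w)$ is a sum of a strictly positive multiple of the ample class $F$ and a nonnegative multiple of the nef class $E$. Since ample plus nef is ample, this gives ampleness. For part (2), plugging $a = \frac{m-n}{m+1}$ into the combination gives $\alpha = 1$, $\beta = 0$, so $-(K_Z+aD_w) = E$, which is semi-ample (hence nef) by part (4) of the example.

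For part (3), the natural approach is to test against the contracted curve. Rewrite
\[
-(K_Z + aD_w) = E + \left(\tfrac{m-n}{m+1} - a\right)D_w,
\]
and intersect with $D_w$. Because $E = f^*(R_l+R_u+R_v)$ is the pullback of a divisor from $Y$ and $D_w$ is $f$-exceptional, the projection formula gives $E \cdot D_w = 0$. Combined with $D_w^2 < 0$ from part (1) of the example and the assumption $\frac{m-n}{m+1} - a > 0$, we obtain
\[
-(K_Z+aD_w)\cdot D_w = \left(\tfrac{m-n}{m+1}-a\right)D_w^2 < 0,
\]
so $-(K_Z+aD_w)$ is not nef. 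No step is technically difficult; the only thing one has to be careful about is the sign bookkeeping when passing between the representations as $\alpha E + \beta F$ and as $E + (\text{const})D_w$, and verifying the endpoints $a = \frac{m-n}{m+1}$ and $a = 1$ match up as expected.
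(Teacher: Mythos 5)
Your proposal is correct and follows essentially the same route as the paper: both identify $F=-(K_Z+D_w)$ and $E=-(K_Z+\frac{m-n}{m+1}D_w)$ via the toric canonical divisor formula, deduce (1) and (2) from the ampleness of $F$ and semi-ampleness of $E$ (you just make the convex-combination coefficients explicit), and prove (3) by intersecting with $D_w$ using $E\cdot D_w=0$ and $D_w^2<0$. Nothing is missing.
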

\begin{proof}
Notice that $K_Z\sim D_l+D_u+D_v+D_w$ by \cite[Thm 8.2.3]{CLS10}, thus we have $E=-(K_Z+\frac{m-n}{m+1}D_w)$ and $F=-(K_Z+D_w)$. Now (1) and (2) follow directly by (3)(4) of Example \ref{Construction of toric surface}. 

For (3), since $E\cdot D_w=0$ and , we have $-(K_Z+aD_w)\cdot D_w=(\frac{m-n}{m+1}-a)D_w^2$. Thus the statement follows by (1) of Example \ref{Construction of toric surface}.
\end{proof}

\begin{lem}\label{lem: preparation for nef}
In Example \ref{Construction of toric surface}, let $a$ be a real number, and $M$ be an integer bigger than $m+1$, then 
\begin{enumerate}
    \item $K_Z+aD_w+ME$ is ample if $0\le a<\frac{m-n}{m+1}$.
    \item $K_Z+aD_w+ME$ is nef if $0\le a=\frac{m-n}{m+1}$.
    \item $K_Z+aD_w+ME$ is not nef if $a>\frac{m-n}{m+1}$.
\end{enumerate}
\end{lem}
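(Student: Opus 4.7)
The plan is to rewrite $K_Z+aD_w+ME$ as an $\Rr$-linear combination of the semi-ample divisor $E$ and the ample divisor $F$ from Example \ref{Construction of toric surface}, then read off nefness/ampleness from the signs of the coefficients.

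First, from the identity $E=-(K_Z+\tfrac{m-n}{m+1}D_w)$ obtained in the proof of Lemma \ref{lem: preparation for anti-nef}, substituting $K_Z=-E-\tfrac{m-n}{m+1}D_w$ into $K_Z+aD_w+ME$ yields
$$K_Z+aD_w+ME=(M-1)E+\left(a-\tfrac{m-n}{m+1}\right)D_w.$$
Next, comparing $E=D_l+D_u+D_v+\tfrac{n+1}{m+1}D_w$ with $F=D_l+D_u+D_v$ gives $D_w=\tfrac{m+1}{n+1}(E-F)$. Substituting this produces
$$K_Z+aD_w+ME=(M-1+\gamma)\,E\;-\;\gamma\,F,\qquad \gamma:=\tfrac{(m+1)a-(m-n)}{n+1},$$
and one checks that $\gamma\le 0$ precisely when $a\le\tfrac{m-n}{m+1}$.

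For (1), when $0\le a<\tfrac{m-n}{m+1}$, the coefficient $-\gamma$ is strictly positive, while the coefficient $M-1+\gamma$ is bounded below by $M-1-\tfrac{m-n}{n+1}$, which is positive under the hypothesis $M>m+1$ (in fact $M\ge\tfrac{m+1}{n+1}$ already suffices, since $n\ge 1$). Thus $K_Z+aD_w+ME$ is a positive combination of the semi-ample $E$ and the ample $F$, and since ``nef plus ample is ample'', it is ample. For (2), $\gamma=0$ so the expression collapses to $(M-1)E$, a positive multiple of a semi-ample divisor, hence nef. For (3), I would intersect with $D_w$ directly: from the projection formula $E\cdot D_w=f^*(R_l+R_u+R_v)\cdot D_w=0$ (as $D_w$ is $f$-exceptional), together with $D_w^2<0$ from Example \ref{Construction of toric surface}(1), one obtains $(K_Z+aD_w+ME)\cdot D_w=\bigl(a-\tfrac{m-n}{m+1}\bigr)D_w^2<0$, so $K_Z+aD_w+ME$ is not nef.

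The main technical point is ensuring the coefficient $M-1+\gamma$ in Case (1) stays non-negative across the full range $0\le a<\tfrac{m-n}{m+1}$; this is precisely where the hypothesis $M>m+1$ is used, though a considerably weaker bound would do. Everything else is routine: semi-ample plus ample is ample, and non-nefness is certified by a single negative intersection number against $D_w$.
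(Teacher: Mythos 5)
Your proof is correct and follows essentially the same route as the paper: both rewrite the divisor in terms of the semi-ample $E$ and the ample $F$ (the paper interpolates between $K_Z+ME\sim(M-\frac{m+1}{n+1})E+\frac{m-n}{n+1}F$ and $K_Z+\frac{m-n}{m+1}D_w+ME\sim(M-1)E$, while you carry the coefficient $\gamma$ explicitly for general $a$), and both certify (3) by intersecting with $D_w$ using $E\cdot D_w=0$ and $D_w^2<0$.
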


\begin{proof}
$K_Z+ME\sim (M-\frac{m+1}{n+1})E+\frac{m-n}{n+1}F$ is ample by (3)(4) of Example \ref{Construction of toric surface}, and  $K_Z+\frac{m-n}{m+1}D_w+ME\sim(M-1)E$ is nef. Thus (1) and (2) are clear.

Since $E\cdot D_w=0$, $(K_Z+aD_w+ME)\cdot D_w=(a-\frac{m-n}{m+1})D_w^2$. Therefore the statement follows by (1) of Example \ref{Construction of toric surface}.
\end{proof}

\begin{ex}\label{counterexample for "anti-nef" }
For any irrational number $r\in(0,1)$, we can select integers $m_i>n_i>1,~i\in\mathbb N^*$ such that $\frac{m_i-n_i}{m_i+1}<r$ and $\lim_{i\to\infty}\frac{m_i-n_i}{m_i+1}=r$. Then we can construct $l_i,u_i,v_i,w_i,Y_i,Z_i,F_i,E_i$ by using $(m_i,n_i)$ as in Example \ref{Construction of toric surface}. Now we have $-(K_{Z_i}+rD_{w_i})$ is ample by Lemma \ref{lem: preparation for anti-nef}(1), and $(Z_i,rD_{w_i})$ is klt since $r<1$. But by Lemma \ref{lem: preparation for anti-nef}(3), $\forall~ \epsilon\in(0,r)$, $(Z_i,(r-\epsilon)D_{w_i})$ is klt and $-K_{Z_i}-(r-\epsilon)D_{w_i}$ is not nef for $i\gg0$.
\end{ex}

\begin{ex}\label{counterexample for "nef"}
For any irrational number $r\in(0,1)$, we can select integers $m_i>n_i>1,~i\in\mathbb N^*$ such that $\frac{m_i-n_i}{m_i+1}>r$ and $\lim_{i\to\infty}\frac{m_i-n_i}{m_i+1}=r$. Then we can construct $l_i,u_i,v_i,w_i,Y_i,Z_i,F_i,E_i$ by using $(m_i,n_i)$ as in Example \ref{Construction of toric surface}. We can also choose $M_i$ sufficiently divisible such that $M_iE_i$ is Cartier and base point free. Let $B_i,C_i$ be two different general elements of $|M_iE_i|$ such that $(Z_i,D_w+B_i+C_i)$ is dlt, then $(Z_i,rD_w+\frac{1}{2}B_i+\frac{1}{2}C_i)$ is klt and $K_{Z_i}+rD_w+\frac{1}{2}B_i+\frac{1}{2}C_i\sim K_{Z_i}+rD_w+M_iE_i$ is ample by Lemma \ref{lem: preparation for nef}(1). But by Lemma \ref{lem: preparation for nef}(3), $\forall~\epsilon\in(0,1-r)$, $K_{Z_i}+(r+\epsilon)D_w+\frac{1}{2}B_i+\frac{1}{2}C_i$ is not nef for $i\gg 0$ and $(Z_i,(r+\epsilon)D_w+\frac{1}{2}B_i+\frac{1}{2}C_i)$ is klt.
\end{ex}

\begin{proof}[Proof of Theorem \ref{thm: rational nef polytope is not uniform}]
Example \ref{counterexample for "anti-nef" } gives a negative answer to Question \ref{ques:open set of rational affine space in a family} when $\mathcal P$ is ``anti-nef and lc", and Example \ref{counterexample for "nef"} gives a negative answer to Question \ref{ques:open set of rational affine space in a family} when $\mathcal P$ is ``nef and klt". In both cases the minimal rational affine subspace $V$ is of dimension $1$.

\end{proof}

\section{Positive results}

\begin{defn}
A curve $C$ on $X/Z$ is called extremal if it generates an extremal ray $R$ of $\overline{\mathrm{NE}}(X/Z)$ which defines a contraction $X \to Y /Z$ 
and if for some ample divisor $H$, we have $H\cdot C = \min\{H\cdot\Sigma~|~\Sigma\in[R]\}$,
where $\Sigma$ ranges over curves generating $R$.
\end{defn}

%\begin{rem}
%If $X/Z$ is $\mathbb Q$-factorial and klt, then for any lc pair $(X,B)$ and any extremal curve $C$ on $X/Z$, $(K_X+B)\cdot C\ge-2\dim X$ by the cone theorem since $(X,aB)$ is klt for any $0<a<1$.
%\end{rem}

The following theorem is somewhat the ``nef" version of \cite[Lemma 5.18]{CH20}:
\begin{thm}[=Theorem \ref{thm: uniform polytope when Cartier Index is bdd}]
We follow the notations and assumptions in Question \ref{ques:open set of rational affine space in a family}. Further assume that for any $s\in S$ :
\begin{enumerate}
    \item $f_s: X_s\to Z_s$ are projective contractions.
%    \item $X_s$ are $d$ dimensional $\mathbb Q$-factorial klt varieties. 
    \item The Cartier indices of $K_{X_s},B_{i,s}$ are uniformly bounded for $s\in S$.
    \item $(X_s, B_s(\bm{b}))$ is lc and $K_{Z_s}+B_s(\bm{b})$ is nef.
\end{enumerate}  
Then there exists an open set $U\ni\bm{b}$ of $V$, such that for any $\bm{v}\in U$ and $s\in S$ we have $(X_s, B_s(\bm{v}))$ is lc and $K_{Z_s}+B_s(\bm{v})$ is nef. 
\end{thm}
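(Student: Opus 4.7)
The plan is to combine the known lc-preservation result with a convexity argument exploiting the bounded Cartier indices. Invoking \cite[Theorem 5.6]{HLS19}, I first obtain an open neighborhood $U_0\subset V$ of $\bm{b}$ on which $(X_s, B_s(\bm{v}))$ is lc for every $s\in S$ and every $\bm{v}\in U_0$. It remains to shrink $U_0$ further so that $K_{X_s}+B_s(\bm{v})$ is also nef over $Z_s$ throughout the shrunken set.

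I would then choose a full-dimensional (in $V$) rational simplex $P\subset U_0$ with vertices $\bm{b}_0,\dots,\bm{b}_k$, where $k=\dim V$, whose relative interior contains $\bm{b}$, and write $\bm{b}=\sum_{j}\alpha_{j}\bm{b}_{j}$ with $\alpha_{j}>0$ and $\sum\alpha_{j}=1$. Since $\bm{v}\mapsto K_{X_s}+B_s(\bm{v})$ is linear and the relative nef cone is closed under positive linear combinations, it suffices to prove that $K_{X_s}+B_s(\bm{b}_{j})$ is nef over $Z_{s}$ for every vertex $\bm{b}_{j}$ and every $s\in S$, provided $P$ is taken sufficiently small. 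The relative interior of such a $P$ then serves as the desired open neighborhood $U$.

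To verify nefness at each rational vertex $\bm{b}_j$, suppose for contradiction that there exist $s\in S$, a vertex $\bm{b}_j$, and an extremal curve $C\in\overline{\NE}(X_s/Z_s)$ with $(K_{X_s}+B_s(\bm{b}_j))\cdot C<0$. Let $I$ be a uniform Cartier-index bound for $K_{X_s}$ and each $B_{i,s}$, and let $N$ be a common denominator for the coordinates of all vertices $\bm{b}_{j}$. Then $IN(K_{X_s}+B_s(\bm{b}_j))$ is Cartier, so the negative intersection number is at most $-1/(IN)$. Combining with $(K_{X_s}+B_s(\bm{b}))\cdot C\ge 0$ and linearity in $\bm{v}$ yields
\[
\sum_{i=1}^{n}(b_{i}-b_{j,i})(B_{i,s}\cdot C)\ \ge\ \tfrac{1}{IN}.
\]

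The main obstacle is converting this inequality into a contradiction as $P$ shrinks, which requires a uniform bound on the intersection numbers $B_{i,s}\cdot C$ for the offending extremal curves. My plan is to obtain such a bound by combining the length-of-extremal-rays theorem for lc pairs, $-(K_{X_s}+B_s(\bm{b}_j))\cdot C\le 2d$ (with $d=\dim X_s$ fixed), with the bounded-Cartier-index hypothesis: the integer $IN(K_{X_s}+B_s(\bm{b}_j))\cdot C$ is confined to the finite range $[-2dIN,-1]$, and a case analysis according to whether $C\subset\Supp B_{i,s}$ together with the effectivity of each $B_{i,s}$ should produce a uniform upper bound $M=M(I,N,d,n)$ on $|B_{i,s}\cdot C|$. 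Once such a bound is in place, shrinking $P$ so that $\sum_{i}|b_{i}-b_{j,i}|<1/(INM)$ contradicts the displayed inequality, finishing the proof. The delicate subcase is when $C$ is contained in the support of some $B_{i,s}$; I expect to handle it by adjunction on $B_{i,s}$, reducing to a lower-dimensional family to which an inductive version of the statement applies.
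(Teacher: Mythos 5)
Your reduction to nefness at the rational vertices of a small simplex, with the gap $1/(IN)$ coming from the bounded Cartier index, runs into a circularity that the paper's proof is specifically designed to avoid. In the nontrivial case $\dim V=v>0$, the point $\bm{b}$ is irrational inside $V$ (by minimality of $V$), so the vertices of any nondegenerate rational $v$-simplex $P\subset V$ containing $\bm{b}$ in its relative interior lie in the lattice $\frac{1}{N}\mathbb Z^n\cap V$, and such a simplex has diameter at least $c/N$ for a constant $c$ depending only on $V$; hence at least one vertex satisfies $\sum_i|b_i-b_{j,i}|\ge c'/N$. Your target $\sum_i|b_i-b_{j,i}|<1/(INM)$ therefore cannot be met for all vertices once $IM>1/c'$, no matter how you shrink $P$ --- shrinking only increases $N$, and your $M=M(I,N,d,n)$ moves with it. Moreover, the uniform bound $M$ on $|B_{i,s}\cdot C|$ that you flag as the main obstacle genuinely fails: the family $\{X_s\}$ is unbounded, and an extremal curve $C\subset\Supp B_{i,s}$ can have $B_{i,s}\cdot C$ arbitrarily negative even with all Cartier indices equal to $1$ (e.g.\ the negative section of a Hirzebruch surface, or the curve $D_w$ in Example \ref{Construction of toric surface}, whose self-intersection tends to $-\infty$ with $m$). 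The adjunction/induction sketch offered for that subcase is not an argument, and it is exactly where the proof breaks.

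The paper's proof sidesteps both problems by reversing the roles of the rational and irrational points. It fixes the rational simplex $\{\bm{b}_1,\dots,\bm{b}_{v+1}\}$ once and for all via \cite[Thm 5.6]{HLS19}, and never bounds any $B_{i,s}\cdot\Gamma$ individually: writing $K_{X_s}+B_s(\bm{b})=\sum_j a_j(K_{X_s}+B_s(\bm{b}_j))$ and using only the length-of-extremal-rays bound $(K_{X_s}+B_s(\bm{b}_j))\cdot\Gamma\ge-2d$ together with the bounded Cartier indices and the \emph{fixed} denominators of the $\bm{b}_j$, it shows that the values of $(K_{X_s}+B_s(\bm{b}))\cdot\Gamma$ lying in $(0,1]$ form a finite set, hence are bounded below by a uniform $\alpha>0$. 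The new vertices are then the (in general irrational) convex combinations $\bm{u}_j=(1-\frac{\alpha}{4d})\bm{b}+\frac{\alpha}{4d}\bm{b}_j$, so no denominator ever re-enters, and the remaining case $(K_{X_s}+B_s(\bm{b}))\cdot\Gamma=0$ is disposed of by noting that minimality of $V$ forces $(K_{X_s}+B_s(\bm{u}))\cdot\Gamma=0$ for every $\bm{u}\in V$. If you want to salvage your write-up, keep your first two steps but extract the discreteness from $(K_{X_s}+B_s(\bm{b}))\cdot\Gamma$ rather than from $(K_{X_s}+B_s(\bm{b}_j))\cdot\Gamma$ alone, and move toward $\bm{b}$ along the segments $[\bm{b},\bm{b}_j]$ instead of re-choosing rational vertices.
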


\begin{proof}
We can assume that $\dim V=v>0$ otherwise we can choose $U=V=\{\bm{b}\}$. By \cite[Thm 5.6]{HLS19}, there exists $\bm{b}_1,\cdots,\bm{b}_{v+1}\in\mathbb Q^n\cap V$ and $a_1,\cdots,c_{v+1}>0$ such that,
\begin{enumerate}
    \item $\sum_{j=1}^{v+1}a_j=1$.
    \item the convex hull of $\{\bm{b}_1,\cdots,\bm{b}_{v+1}\}$ contains an open neighborhood of $\bm{b}$ in $V$.
    \item $\sum_{j=1}^{v+1}a_j(K_{X_s}+B_s(\bm{b}_j))=K_{X_s}+B_s(\bm{b})$ for any $s\in S$.
    \item $(X_s, B_s(\bm{b}_j))$ is lc for any $1\le j\le v+1$ and any $s\in S$.
    
\end{enumerate}

If $1>K_{X_s}+B_s(\bm{b})\cdot\Gamma_s>0$ for some extremal curve $\Gamma_s$ on $X_s/Z_s$, then we claim that we can find a number $1>\alpha>0$ independent of $s$ such that $(K_{X_s}+B_s(\bm{b}))\cdot\Gamma_s>\alpha$. Indeed, if 
$$
1\ge(K_{X_s}+B_s(\bm{b}))\cdot\Gamma_s=\sum_{j=1}^{v+1}a_j(K_{X_s}+B_s(\bm{b}_j))\cdot\Gamma_s,
$$
then each $a_j(K_{X_s}+B_s(\bm{b}_j))\cdot\Gamma_s$ is bounded from above and below since $(K_{X_s}+B_s(\bm{b}_j))\cdot\Gamma_s\ge-2d$ for any $1\le j\le v+1$ by the cone theorem. 

Since the Cartier indices of $K_{X_s},B_{s,i}$ are uniformly bounded, there are only finitely many possibilities of numbers $a_j(K_{X_s}+B_s(\bm{b}_j))\cdot\Gamma_s$ and the existence of $\alpha$ is clear. 

We claim that $K_{X_s}+B_s(\bm{u}_j)$ is nef for any $1\le j\le v+1$, where $\bm{u}_j=(1-\frac{\alpha}{4d})\bm{b}+\frac{\alpha}{4d}\bm{b}_j$. Otherwise there exists an extremal curve $\Gamma_s$ on $X_s/Z_s$ such that $K_{X_s}+B_s(\bm{u}_j)\cdot\Gamma_s<0$, then $(K_{X_s}+B_s(\bm{b}))\cdot\Gamma_s<\alpha$ since $(K_{X_s}+B_s(\bm{b}_j))\cdot\Gamma_s\ge-2d$. Thus $(K_{X_s}+B_s(\bm{b}))\cdot\Gamma_s=0$ because $K_{X_s}+B_s(\bm{b})$ is nef. Since $V$ is the minimal rational affine subspace containing $\bm{b}\in\mathbb R^n$, $(K_{X_s}+B_s(\bm{u}))\cdot\Gamma_s=0$ for any $\bm{u}\in V$, which contradicts $\bm{u}_j\in V$.

Now the statement follows since the convex hull of $\{\bm{u}_1,\cdots,\bm{u}_{v+1}\}$ contains an open neighborhood of $\bm{b}$ in $V$.

\end{proof}

\begin{rem}
It's also worth mentioning that positive answers can be given to Question \ref{ques:open set of rational affine space in a family} when $\mathcal P$ is ``nef and lc" or ``ant-nef and lc" if  $\rho(X_s)=1 $ for any $s\in S$ in \cite[Thm 3.8]{Nak16} and \cite[7.16]{CH20}.
\end{rem}

\end{document}